\newtheorem{theorem}{Theorem}[section]
\newtheorem{lemma}[theorem]{Lemma}
\theoremstyle{remark}
\theoremstyle{definition}
\DeclareMathOperator{\core}{core}
\author{Neil Saunders}
\title{A Strict Inequality for a Minimal Degree of a Direct Product}
\begin{document}

\maketitle

\begin{abstract}
The minimal faithful permutation degree $\mu(G)$ of a finite group $G$ is the least non-negative integer $n$ such that $G$ embeds in the
symmetric group $Sym(n)$. Work of Johnson and Wright in the 1970's established conditions for when $\mu(H \times K) = \mu(H) + \mu(K)$, for
finite groups $H$ and $K$. Wright asked whether this is true for all finite groups. A counter-example of degree 15 was provided by the referee
and was added as an addendum in Wright's paper. Here we provide a counter-example of degree 12.
\end{abstract}

\section{Introduction}
The minimal faithful permutation degree $\mu(G)$ of a finite group $G$ is the least non-negative integer $n$ such that $G$ embeds in the
symmetric group $Sym(n)$. It is well known that $\mu(G)$ is the smallest value of $\sum_{i=1}^{n}|G:G_i|$ for a collection of subgroups
$\{G_1,\ldots, G_n\}$ satisfying $\bigcap_{i=1}^{n}\core(G_i)=\{1\}$, where $\core(G_i)=\bigcap_{g \in G}G_{i}^{g}$. \vspace{12pt}

We first give a theorem due to Karpilovsky \cite{K70} which will be needed later. The proof of it can be found in \cite{J71} or \cite{S05}.
\begin{theorem} \label{theorem:abelian}
Let A be a non-trivial finite abelian group and let $A \cong A_1 \times \ldots \times A_n$ be its direct product decomposition into non-trivial
cyclic groups of prime power order. Then $$\mu(A)= a_1 + \ldots + a_n,$$ where $|A_i|=a_i$ for each $i$.
\end{theorem}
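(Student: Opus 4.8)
The plan is to prove the two inequalities $\mu(A) \le a_1 + \cdots + a_n$ and $\mu(A) \ge a_1 + \cdots + a_n$ separately, working throughout with the combinatorial description of $\mu$ recalled in the introduction. Since $A$ is abelian every subgroup is normal, so $\core(H) = H$ for all $H \le A$; hence $\mu(A)$ is the least value of $\sum_{j} |A : H_j|$ over families of subgroups $\{H_1, \ldots, H_m\}$ with $\bigcap_j H_j = 1$. The upper bound is then immediate: taking $H_i = \prod_{k \ne i} A_k$ gives $|A:H_i| = a_i$ and $\bigcap_i H_i = 1$, so $\mu(A) \le \sum_i a_i$ (concretely, the disjoint union of the regular representations of the cyclic factors). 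Everything reduces to the lower bound $\sum_j |A : H_j| \ge \sum_i a_i$ for an arbitrary trivial-intersection family.

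The first reduction is to assume each quotient $A/H_j$ is cyclic of prime-power order. Decomposing the finite abelian group $A/H_j$ into nontrivial cyclic factors of prime-power order and pulling back, one writes $H_j = \bigcap_s H_{j,s}$ with each $A/H_{j,s}$ cyclic of prime-power order; replacing $H_j$ by the $H_{j,s}$ preserves the trivial-intersection condition and replaces the summand $|A:H_j| = \prod_s |A:H_{j,s}|$ by $\sum_s |A:H_{j,s}|$, which is no larger because each factor is at least $2$ and $\prod \ge \sum$ for such factors. After this reduction each $H_j$ has index a power of a single prime $p$. Grouping the $H_j$ by their prime and intersecting with the Sylow subgroup $A_{(p)}$ (the $H_j$ belonging to other primes contain $A_{(p)}$ and may be discarded in the $p$-part) separates the problem completely across primes, so it suffices to treat the case where $A$ is a $p$-group.

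This $p$-group case is the \emph{main step}. Here $A = \prod_{i=1}^r A_i$ with $|A_i| = p^{e_i}$, and after the reduction we must show $\sum_j p^{c_j} \ge \sum_i p^{e_i}$ whenever there are surjections $A \to \mathbb{Z}/p^{c_j}$ whose kernels intersect trivially, i.e.\ whenever $A$ embeds in $\prod_j \mathbb{Z}/p^{c_j}$. The naive socle bound, restricting to elements of order $p$, only yields $\sum_j p^{c_j} \ge r\cdot p$; this is correct in the elementary-abelian case but far too weak once some $e_i > 1$, and overcoming it is the crux. The remedy is to use every layer of the group rather than just the socle. Writing $M_\ell = |\{i : e_i \ge \ell\}|$ and $N_\ell = |\{j : c_j \ge \ell\}|$, I would establish the key inequality $N_\ell \ge M_\ell$ for every $\ell \ge 1$: applying the endomorphism $x \mapsto x^{p^{\ell-1}}$ to the embedding shows that $A^{p^{\ell-1}}$, a group of rank exactly $M_\ell$, embeds into the product of those cyclic factors with $c_j \ge \ell$, of which there are $N_\ell$; since rank cannot increase on passing to a subgroup of a finite abelian $p$-group, $M_\ell \le N_\ell$.

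Combining these inequalities (the case $\ell = 1$ giving in particular $m = N_1 \ge M_1 = r$) with the telescoping identity $p^{c} = 1 + \sum_{\ell \ge 1}(p^\ell - p^{\ell-1})\,\mathbf{1}[c \ge \ell]$ then gives
$$\sum_j p^{c_j} = m + \sum_{\ell \ge 1}(p^\ell - p^{\ell-1}) N_\ell \ge r + \sum_{\ell \ge 1}(p^\ell - p^{\ell-1}) M_\ell = \sum_i p^{e_i},$$
as required. The heart of the argument, and the step I expect to be most delicate to pin down, is exactly the layer-by-layer rank inequality $N_\ell \ge M_\ell$, since it is precisely what separates the true value $\sum_i a_i$ from the weaker socle estimate.
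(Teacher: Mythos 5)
The paper does not actually prove this theorem: it is quoted from Karpilovsky, with the proof deferred to \cite{J71} and \cite{S05}, so there is no in-paper argument to compare against. Your proof is correct and is essentially the standard one. The upper bound, the reduction to cyclic prime-power quotients via $\prod \ge \sum$ for integers at least $2$, and the separation across primes by intersecting with Sylow subgroups all check out, and the main step is sound: if $A$ embeds in $C = \prod_j \mathbb{Z}/p^{c_j}$ then $A^{p^{\ell-1}}$ embeds in $C^{p^{\ell-1}}$, and comparing ranks (computed as $\dim_{\mathbb{F}_p}$ of the $p$-torsion, which cannot grow when passing to a subgroup) gives $N_\ell \ge M_\ell$ for every layer $\ell$; the telescoping identity then converts these layer inequalities into $\sum_j p^{c_j} \ge \sum_i p^{e_i}$. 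This layer-counting finish is a clean, self-contained way to get past the socle bound, and the whole argument is in the same spirit as Johnson's original treatment (reduction to $p$-groups and cyclic quotients, then a counting argument), so I would regard it as a complete proof of the theorem as stated.
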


One of the themes of Johnson and Wright's work was to establish conditions for when \begin{equation} \mu(H \times K)=\mu(H)+\mu(K)
\label{eq:directsum} \end{equation} for finite groups $H$ and $K$. The next result is due to Wright \cite{W75}.

\begin{theorem} \label{theorem:nilpotent}
Let $G$ and $H$ be non-trivial nilpotent groups. Then $\mu(G \times H)=\mu(G) + \mu(H)$.
\end{theorem}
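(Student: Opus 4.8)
The plan is to prove the non-trivial inequality $\mu(G\times H)\ge \mu(G)+\mu(H)$, since the reverse is immediate: taking faithful representations of $G$ and $H$ on $\mu(G)$ and $\mu(H)$ points and letting $G\times H$ act on the disjoint union gives a faithful action on $\mu(G)+\mu(H)$ points. For the lower bound I would first reduce to $p$-groups. A finite nilpotent group is the direct product of its Sylow subgroups, so writing $G=\prod_p G_p$ and $H=\prod_p H_p$ gives $G\times H=\prod_p(G_p\times H_p)$, a direct product of groups of pairwise coprime order. Hence it suffices to establish (i) that $\mu$ is additive over direct products of coprime order, and (ii) that $\mu(P\times Q)=\mu(P)+\mu(Q)$ for two $p$-groups $P,Q$ belonging to the same prime $p$; applying (i) to the Sylow decompositions of $G$, $H$ and $G\times H$ then reduces the nilpotent statement to (ii).

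For (i), given a minimal faithful family of subgroups $\{R_i\}$ of a coprime direct product $G\times H$, coprimeness forces $R_i=A_i\times B_i$ with $A_i=R_i\cap G$ and $B_i=R_i\cap H$, and then the core of $R_i$ in $G\times H$ is $\core_G(A_i)\times\core_H(B_i)$. Whenever both $A_i<G$ and $B_i<H$, I would split the constituent with stabiliser $R_i$ into the two constituents with stabilisers $A_i\times H$ and $G\times B_i$; one checks that the intersection of all the cores is unchanged (so faithfulness is preserved), while the cost $|G:A_i|\,|H:B_i|$ is replaced by $|G:A_i|+|H:B_i|$, which is no larger since both indices are at least $2$. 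After this reshuffling every constituent is ``pure'', the $G$-constituents form a faithful family for $G$ and the $H$-constituents one for $H$, and summing indices yields $\mu(G\times H)\ge\mu(G)+\mu(H)$.

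The real content is (ii). Here I would work through the socle $S=\operatorname{soc}(P\times Q)$, which for a $p$-group is the central elementary abelian subgroup $\Omega_1(Z(P\times Q))$ and decomposes as $\operatorname{soc}(P)\oplus\operatorname{soc}(Q)$ as an $\mathbb{F}_p$-vector space. Since every non-trivial normal subgroup of a nilpotent group meets the socle, a representation is faithful if and only if it is faithful on $S$; concretely, a family $\{R_i\}$ is faithful exactly when the subspaces $V_i:=\core(R_i)\cap S=R_i\cap S$ satisfy $\bigcap_i V_i=0$. Passing to an irredundant minimal family and invoking Johnson's structural analysis, I would arrange that each constituent quotient $(P\times Q)/\core(R_i)$ has a unique minimal normal subgroup, equivalently cyclic centre; this forces each $V_i$ to be a hyperplane of $S$, and since a core-free subgroup meets the centre trivially it gives the per-constituent bound $|P\times Q:R_i|\ge |Z((P\times Q)/\core(R_i))|$.

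The main obstacle is the depth accounting. Restricting to the socle alone only yields $\mu(P\times Q)\ge\mu(\operatorname{soc}P)+\mu(\operatorname{soc}Q)$, which by Theorem~\ref{theorem:abelian} is strictly too weak as soon as $P$ or $Q$ has exponent larger than $p$ (already $\mu(\mathbb{Z}/p^2)=p^2>p$). To recover the correct bound I must show that the extra cost each constituent pays beyond $p$ is localised to a single factor. In the coprime case this localisation was automatic because cores split; for a single prime it fails, the obstruction being diagonal constituents whose core projects onto both $P$ and $Q$ (such as the diagonal line in $\mathbb{Z}/p\times\mathbb{Z}/p$). I would resolve this by assigning to each constituent the socle hyperplane $V_i$ it separates, matching these hyperplanes to a decomposition of $S$ into lines drawn from $\operatorname{soc}P$ and $\operatorname{soc}Q$, and proving that the cost attached to a line lying in $\operatorname{soc}P$ is governed entirely by the structure of $P$ above that line, and symmetrically for $Q$. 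Establishing this factor-local cost estimate is the crux; granting it, the costs sum to at least $\mu(P)+\mu(Q)$, which together with the easy direction proves $\mu(P\times Q)=\mu(P)+\mu(Q)$ and hence the theorem.
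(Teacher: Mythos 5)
First, a point of reference: the paper does not prove this theorem; it is quoted from Wright \cite{W75}, so there is no in-paper argument to measure your proposal against. On its own terms, your outline is correct and essentially complete everywhere except at the one step that constitutes the actual content of the theorem. The easy inequality via disjoint unions, the splitting $R_i=(R_i\cap G)\times(R_i\cap H)$ of subgroups of a coprime direct product, the reshuffling of $R_i$ into $A_i\times H$ and $G\times B_i$ with the estimate $|G:A_i|\,|H:B_i|\ge|G:A_i|+|H:B_i|$ and the check that cores (hence faithfulness) are preserved, and the Sylow reduction of the nilpotent case to the coprime case plus the single-prime case are all sound. So are the socle observations: faithfulness of a family of subgroups of a $p$-group is detected on $S=\Omega_1(Z)$, and a core-free subgroup has index at least $|Z|$.

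The gap is exactly where you place it, and it is not a removable one: statement (ii), additivity for two $p$-groups at the \emph{same} prime, is the whole theorem once Theorem \ref{theorem:abelian} and the coprime case are granted, and your proposal reduces it to a ``factor-local cost estimate'' that you explicitly assume rather than prove. Two concrete difficulties are left open. First, in a minimal faithful family the hyperplanes $V_i=R_i\cap S$ determine the complementary lines $N_i=\bigcap_{j\ne i}V_j$; you cannot simply re-choose these lines to lie in $\operatorname{soc}P$ or $\operatorname{soc}Q$, so the proposed ``matching'' requires an exchange argument at the level of the subgroups $R_i$ themselves --- replacing diagonal constituents by new ones without increasing $\sum|P\times Q:R_i|$ and without destroying $\bigcap\core(R_i)=1$ --- and that is group theory, not linear algebra on $S$. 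Second, even for a constituent tagged with a line $N\le\operatorname{soc}P$, the stabiliser $R_i$ need not respect the factorisation $P\times Q$, so bounding $|P\times Q:R_i|$ below by the minimal index of a subgroup of $P$ whose core avoids $N$ needs a separate argument; this is precisely Wright's key lemma. As written, the proposal is an accurate road map that identifies the obstruction (diagonal minimal normal subgroups) but stops short of surmounting it, so it does not constitute a proof.
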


Further in \cite{W75}, Wright constructed a class of groups $\mathscr{C}$ with the property that for all $G \in \mathscr{C}$, there exists a
nilpotent subgroup $G_1$ of $G$ such that $\mu(G_1)=\mu(G)$. It is a consequence of Thereom (\ref{theorem:nilpotent}) that $\mathscr{C}$ is
closed under direct products and so (\ref{eq:directsum}) holds for any two groups $H,K \in \mathscr{C}$. Wright proved that $\mathscr{C}$
contains all nilpotent, symmetric, alternating and dihedral groups, however the extent of it is still an open problem. In \cite{EP88}, Easdown
and Praeger showed that (\ref{eq:directsum}) holds for all finite simple groups. \vspace{12pt}

The counter-example to (\ref{eq:directsum}) was provided by the referee in Wright's paper \cite{W75} and involved subgroups of the standard
wreath product $C_5 \wr Sym(3)$, specifically the group $G(5,5,3)$ which is a member of a class of unitary reflection groups. We give a brief
exposition on these groups now. \vspace{12pt}

Let $m$ and $n$ be positive integers, let $C_m$ be the cyclic group of order $m$ and $B=C_m \times \ldots \times C_m$ be the product of $n$
copies of $C_m$. For each divisor $p$ of $m$ define the group $A(m,p,n)$ by $$A(m,p,n)= \{(\theta_1,\theta_2,\ldots, \theta_n) \in B  \ | \
(\theta_1\theta_2\ldots\theta_n)^{m/p}=1 \}.$$ It follows that $A(m,p,n)$ is a subgroup of index $p$ in $B$ and the symmetric group $Sym(n)$
acts naturally on $A(m,p,n)$ by permuting the coordinates. \vspace{12pt}

$G(m,p,n)$ is defined to be the semidirect product of  $A(m,p,n)$ by $Sym(n)$. It follows that $G(m,p,n)$ is a normal subgroup of index $p$ in
$C_m \wr Sym(n)$ and thus has order $m^nn!/p$. \vspace{12pt}

It is well known that these groups can be realized as finite subgroups of $GL_n(\mathbb{C})$, specifically as $n \times n$ matrices with exactly
one non-zero entry, which is a complex $m$th root of unity, in each row and column such that the product of the entries is a complex $(m/p)$th
root of unity. Thus the groups $G(m,p,n)$ are sometimes referred to as monomial reflection groups. For more details on the groups $G(m,p,n)$,
see \cite{OT92}.

\section{Calculation of $\mu(G(4,4,3))$}
Recall that $G(4,4,3)=A(4,4,3) \rtimes Sym(3)$, where $$A(4,4,3)=\{(\theta_1,\theta_2,\theta_3) \in C_4 \times C_4 \times C_4 \ | \
\theta_1\theta_2\theta_3=1 \}$$ which is isomorphic to a product of two copies of the cyclic group of order 4. Hence $$G(4,4,3) \cong (C_4
\times C_4) \rtimes Sym(3).$$ From now on, we will let $G$ denote $G(4,4,3)$. A presentation for this group can be given thus $$G= \langle
x,y,a,b | x^4=y^4=b^3=a^2=1, xy=yx, x^a=y, x^b=y, y^b=x^{-1}y^{-1}, b^a=b^{-1} \rangle.$$ Since $\langle x,y \rangle \cong C_4 \times C_4$ is a
proper subgroup of $G$ we have by Theorem \ref{theorem:abelian}, that $8 = \mu(\langle x,y \rangle)\leq \mu(G)$. Moreover since $G$ is a proper
subgroup of the wreath product $W:=C_4 \wr Sym(3)$, for which $\mu(W)=12$, we have the inequalities $$8 \leq \mu(G) \leq 12.$$ We will prove
that in fact $\mu(G)=12$ by a sequence of lemmas.

\begin{lemma} \label{lemma:tranisitive}
$\langle x^2, y^2 \rangle$ is the unique minimal normal subgroup of $G$.
\end{lemma}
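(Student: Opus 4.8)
The plan is to exploit the semidirect decomposition $G = V \rtimes S$, where $V = \langle x, y\rangle \cong C_4 \times C_4$ is the abelian normal subgroup and $S = \langle a, b\rangle \cong Sym(3)$ is the complement. Writing $V$ additively as $(\mathbb{Z}/4)^2$ with $x,y$ as a basis, the subgroup $\langle x^2, y^2\rangle$ is exactly $2V$, which in this particular group coincides with the set $\{v \in V : 2v = 0\}$ of elements of order dividing $2$; it is therefore characteristic in $V$. First I would verify directly that it is normal in $G$: conjugation by $x,y$ is trivial since $V$ is abelian, while the defining relations give $(x^2)^a = y^2$, $(y^2)^a = x^2$, $(x^2)^b = y^2$ and $(y^2)^b = (x^{-1}y^{-1})^2 = x^2y^2$, so both $a$ and $b$ preserve $\langle x^2, y^2\rangle$.

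Next I would view $\langle x^2, y^2\rangle \cong (\mathbb{Z}/2)^2$ as an $\mathbb{F}_2 S$-module. The computations above express the actions of $a$ and $b$ on the basis $x^2, y^2$ as the matrices $\bigl(\begin{smallmatrix} 0 & 1 \\ 1 & 0\end{smallmatrix}\bigr)$ and $\bigl(\begin{smallmatrix} 0 & 1 \\ 1 & 1\end{smallmatrix}\bigr)$, which generate $GL_2(\mathbb{F}_2) \cong Sym(3)$. Hence $\langle x^2, y^2\rangle$ is the natural two-dimensional module and is irreducible; equivalently, none of the three order-$2$ subgroups $\langle x^2\rangle$, $\langle y^2\rangle$, $\langle x^2y^2\rangle$ is simultaneously fixed by $a$ and $b$. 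This already establishes that $\langle x^2, y^2\rangle$ is a minimal normal subgroup of $G$.

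For uniqueness I would show that every nontrivial normal subgroup $N$ of $G$ contains $\langle x^2, y^2\rangle$; minimality then forces any minimal normal subgroup to equal it. The key step is to prove $N \cap V \neq 1$. Since $S$ acts faithfully on $V$ — the kernel of the action is a normal subgroup of $Sym(3)$ not containing $b$, hence trivial — and $V$ is abelian, one gets $C_G(V) = V$. If $N \cap V = 1$, then $[N,V] \leq N \cap V = 1$ (as both $N$ and $V$ are normal), so $N \leq C_G(V) = V$, contradicting $N \cap V = 1$ and $N \neq 1$. Thus $M := N \cap V$ is a nontrivial $S$-invariant subgroup of $V$. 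Finally I would argue that any such $M$ contains $2V$: if $M \leq 2V$, irreducibility of $2V$ gives $M = 2V$; otherwise $M$ contains an element of order $4$, whose square is a nontrivial element of $M \cap 2V$, and irreducibility of $2V$ again yields $2V \leq M \leq N$. The main obstacle is precisely the reduction $N \cap V \neq 1$, namely pinning down $C_G(V) = V$ and invoking the commutator containment $[N,V] \leq N \cap V$; once that is secured, the irreducibility of $\langle x^2,y^2\rangle$ over $\mathbb{F}_2$ completes the argument.
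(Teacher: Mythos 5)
Your argument is correct, but it takes a genuinely different route from the paper. The paper's proof is a direct case analysis: it takes an arbitrary nontrivial $\alpha = x^iy^jb^ka^l$ in a normal subgroup $N$ and, by repeatedly multiplying $\alpha$ by conjugates $\alpha^{\pm a}$, $\alpha^{\pm b}$, drives the element down into the base group $\langle x,y\rangle$ and then into $\langle x^2,y^2\rangle$, checking each residue pattern of $(i,j,k,l)$ by hand. You replace that entire computation with three structural observations: (i) $C_G(V)=V$ because $S\cong Sym(3)$ acts faithfully on $V=\langle x,y\rangle$, so the standard commutator containment $[N,V]\leq N\cap V$ forces $N\cap V\neq 1$ for any nontrivial normal $N$; (ii) $\langle x^2,y^2\rangle = 2V$ is the natural irreducible $\mathbb{F}_2 Sym(3)$-module, since the matrices of $a$ and $b$ generate $GL_2(\mathbb{F}_2)$; and (iii) any nontrivial $G$-invariant subgroup of $V$ meets $2V$ nontrivially (squaring an order-$4$ element if necessary), hence contains all of $2V$ by irreducibility. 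Both proofs establish the same stronger fact — every nontrivial normal subgroup contains $\langle x^2,y^2\rangle$ — which is what uniqueness of the minimal normal subgroup requires. What the paper's approach buys is complete elementarity: only the defining relations are used, at the cost of a long and error-prone enumeration of subcases. What yours buys is brevity, a clear explanation of \emph{why} the lemma holds (self-centralizing abelian normal subgroup plus an irreducible socle), and immediate generalizability to other groups of the form $A\rtimes S$ with $S$ acting faithfully and irreducibly on the torsion of $A$; the price is that you invoke module-theoretic language and the standard normal-subgroup commutator lemma rather than keeping everything at the level of explicit elements. All the individual steps you sketch check out against the relations in the paper ($(x^2)^a=y^2$, $(y^2)^b=x^2y^2$, etc.), so there is no gap to repair.
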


\begin{proof}
Observe by the conjugation action of $a$ and $b$ on $x^2$ and $y^2$ that $M= \langle x^2, y^2 \rangle$ is indeed normal in $G$. Let $N$ be a
non-trivial normal subgroup of $G$ so there exists an $$\alpha=x^iy^jb^ka^l$$ in $N$ where $i,j \in \{0,1,2,3\},\ k \in \{0,1,2\},\ l \in
\{0,1\}$ are not all zero. It remains to show that $M$ is contained in $N$.\newline
\newline
\underline {\textbf{Case (a):}} $k=l=0$. \newline \underline{Subcase (i)}: $i=j$ so $\alpha=x^iy^i$.

Then $\alpha\alpha^b=x^iy^iy^ix^{-i}y^{-i}=y^i \in N$, so $y^{-i}\alpha=x^i \in N$. But $i \neq 0$, so $M \subseteq \langle x^i, y^i \rangle$.
Hence $M \subseteq N$, as required. \newline
\newline
\underline{Subcase (ii)}: $i+j \not\equiv 0$ mod $4$.

Then $\alpha\alpha^a=x^{i+j}y^{i+j}$ and we are back in Subcase (i).\newline
\newline
\underline{Subcase (iii)}: $i+j \equiv 0$ mod $4$.

Then $\alpha\alpha^b=x^{i-j}y^i$. If $2i-j \not\equiv 0 $ mod $4$, then we are back in Subcase (ii), so suppose $2i \equiv j$ mod $4$. Then
together with $i+j \equiv 0$ mod $4$ it follows that $i=0$. Therefore $j$ is zero and $\alpha$ is trivial. This completes case (a).\newline
\newline
\underline {\textbf{Case (b):}} $k \neq 0$ or $l\neq0$. \newline \underline{Subcase (i)}: $l=0$ so $k\neq 0$ \newline

Then $\alpha\alpha^{-b}=x^iy^jb^k(x^{-j}y^{i-j}b^k)^{-1}=x^{i+j}y^{2j-i}$. If $i+j \not\equiv 0$ or $2j-i \not\equiv 0 $ mod $4$, then we are
back in Case (a) so suppose  $i+j\equiv 2j-i \equiv 0$ mod $4$. Solving gives $i=j=0$ and so $\alpha=b^k$, whence $\langle b \rangle \in N$.
Hence $$b^{-1}b^x=b^{-1}x^{-1}bx=y^{-1}x \in N$$ and we are back in Case (a).\newline
\newline
\underline{Subcase (ii)}: $l\neq 0$ and $k\neq 0$.\newline

Then  $\alpha\alpha^{-a}= x^iy^jb^ka^l(x^{j}y^{i}b^{-k}a^l)^{-1}=x^iy^jb^ka^la^{-l}b^kx^{-j}y^{-i}=x^py^qb^{2k}$ where $p,q\in \{0,1,2,3\}$ and
we are back in Subcase (i), replacing $k$ by $2k$.\newline
\newline
\underline{Subcase (iii)}: $k=0$ so $l=1$ \newline Then $$\alpha\alpha^{-b}=x^iy^ja(x^iy^ja)^{-b}= x^py^qb^2$$ for some $p,q \in \{0,1,2,3\}$
and again we are back in Subcase (i). \vspace{12pt}

This completes the proof.
\end{proof}

It is worth observing at this point that Lemma \ref{lemma:tranisitive} tells us that any minimal faithful representation of $G$ is necessarily
transitive. That is, any minimal faithful collection of subgroups $\{G_1,\ldots,G_n\}$ is just a single core-free subroup.
\begin{lemma} \label{lemma:order}
Elements of $\langle x,y \rangle b$ and $\langle x,y \rangle b^2$ have order $3$. All other elements of $G$ have order dividing by $8$.
\end{lemma}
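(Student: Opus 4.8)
The plan is to read the order of each element off its image under the natural surjection $\pi\colon G \to G/N \cong Sym(3)$, where $N := \langle x,y\rangle \cong C_4 \times C_4$. The six cosets of $N$ are $N$, $Nb$, $Nb^2$, $Na$, $Nab$ and $Nab^2$; since $\pi$ sends $b$ to a $3$-cycle and $a$ to a transposition, the cosets $Nb$ and $Nb^2$ are precisely those mapping onto the two $3$-cycles of $Sym(3)$, while $N$, $Na$, $Nab$, $Nab^2$ map onto the identity and the three transpositions. I would handle the three possible orders of $\pi(g)$ in turn.

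The cases $|\pi(g)| \in \{1,2\}$ are immediate and give the ``order dividing $8$'' half. If $g \in N$ then, as $N$ is abelian of exponent $4$, the order of $g$ divides $4$, hence $8$. If $\pi(g)$ is a transposition then $\pi(g^2) = \pi(g)^2 = 1$, so $g^2 \in N$; because $\exp(N) = 4$ this yields $g^8 = (g^2)^4 = 1$. Thus every element outside $Nb \cup Nb^2$ has order dividing $8$, using only $\exp(N) = 4$ and $|\pi(g)| \in \{1,2\}$.

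The substance of the lemma is that every element of $Nb$, and symmetrically of $Nb^2$, has order exactly $3$. I would write such an element as $g = nb$ with $n = x^iy^j \in N$ and use that conjugation by $b$ induces the order-$3$ automorphism $\beta$ of $N$ recorded by $x^b = y$, $y^b = x^{-1}y^{-1}$. Since $N$ is abelian and $b^3 = 1$, the usual expansion in a semidirect product gives $g^3 = n\,\beta(n)\,\beta^2(n)$. Computing $\beta(x^iy^j) = x^{-j}y^{i-j}$ and $\beta^2(x^iy^j) = x^{j-i}y^{-i}$ and collecting terms, the total exponent of $x$ and of $y$ each vanish, so $g^3 = 1$; as $g \notin N$ forces $g \neq 1$, its order divides $3$ and is nontrivial, hence equals $3$. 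The coset $Nb^2$ is identical, or follows via $b^2 = b^{-1}$.

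The only genuine computation — and the step most prone to sign errors — is the cancellation $n\,\beta(n)\,\beta^2(n) = 1$, which requires the explicit images of $\beta$ and $\beta^2$ above. Everything else is structural, so I anticipate no real obstacle.
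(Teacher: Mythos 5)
Your proof is correct and follows essentially the same route as the paper: a coset analysis modulo $N=\langle x,y\rangle$, showing that squaring any element outside $Nb\cup Nb^2$ lands in $N$ (whence order dividing $8$), and that cubes of elements of $Nb\cup Nb^2$ are trivial. The only difference is that you spell out the cancellation $n\,\beta(n)\,\beta^2(n)=1$ that the paper dismisses as a ``routine calculation'', and your computation of $\beta$ and $\beta^2$ checks out.
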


\begin{proof}
It is a routine calculation to show that any element of the form $\alpha=x^iy^jb^k$ for $k$ nonzero has order three. Now suppose
$\alpha=x^iy^jb^ka^l$ where $l$ is nonzero. Then $l=1$ and we have $$\alpha^2=x^py^q(b^ka)^2=x^py^q,$$ for some $p,q$, which has order dividing
4. Therefore $\alpha$ has order dividing 8.
\end{proof}

It is an immediate consequence that $G$ does not contain any element of order 6.

\begin{lemma}
If $L$ is a core-free subgroup of $G$ then $|G:L| \geq 12$.
\end{lemma}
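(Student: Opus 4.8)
The plan is to translate the index bound into a statement about a single distinguished subgroup. By Lemma~\ref{lemma:tranisitive}, $M := \langle x^2, y^2 \rangle$ is the unique minimal normal subgroup of $G$, so every non-trivial normal subgroup of $G$ contains $M$. Hence $\core(L)$ is non-trivial if and only if $M \subseteq L$, and $L$ is core-free precisely when $M \not\subseteq L$. Since $|G| = 96$ and every index is a divisor of $96$, and since no divisor of $96$ lies strictly between $8$ and $12$, it suffices to prove the contrapositive: if $|L| \geq 12$ then $M \subseteq L$. This forces every core-free $L$ to have $|L| \leq 8$, whence $|G:L| \geq 12$.

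First I would set $A = \langle x, y \rangle \cong C_4 \times C_4$, a normal subgroup of order $16$ with $G/A \cong Sym(3)$ generated by the images of $a$ and $b$. Writing $\bar{L} = LA/A \leq G/A$, I have $|L| = |L \cap A|\,|\bar{L}|$ with $|\bar{L}|$ a divisor of $6$, and I would split the argument according to whether $3$ divides $|\bar{L}|$. If $3 \mid |\bar{L}|$, then $\bar{L}$ contains the order-$3$ subgroup $\langle \bar{b} \rangle$, so $L$ contains an element $\ell = \alpha b$ with $\alpha \in A$; since $A$ is abelian, conjugation by $\ell$ agrees with conjugation by $b$ on $A$, so $L \cap A$ is a $\langle b \rangle$-invariant subgroup of $A$. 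The key structural input is that the only $\langle b \rangle$-invariant subgroups of $A$ are $1$, $M$, and $A$. Because $|L| \geq 12$ and $|\bar{L}| \leq 6$ force $|L \cap A| \geq 2$, the invariant subgroup $L \cap A$ is non-trivial, hence equals $M$ or $A$, and in either case $M \subseteq L$. If instead $3 \nmid |\bar{L}|$, then $|\bar{L}| \leq 2$, so $|L \cap A| \geq |L|/2 \geq 6$, forcing $|L \cap A| \in \{8, 16\}$ as a divisor of $16$; then $L \cap A$ has index at most $2$ in $A$ and so contains the Frattini subgroup $\Phi(A) = A^2 = M$, giving $M \subseteq L$ once more.

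I expect the one genuine obstacle to be verifying that $1$, $M$, and $A$ are the only $\langle b \rangle$-invariant subgroups of $A$; the rest is bookkeeping on orders. Using $x^b = y$ and $y^b = x^{-1}y^{-1}$, the action of $b$ on $A \cong (\mathbb{Z}/4)^2$ is by a matrix of order $3$ whose characteristic polynomial $t^2 + t + 1$ is irreducible modulo $2$, so $A$ becomes a cyclic $\langle b \rangle$-module isomorphic to the local ring $\mathbb{Z}[\omega]/(4)$, whose only submodules correspond to $1$, $M$, and $A$. Should a self-contained argument be preferred, I would instead check directly that $b$ fixes no subgroup of order $2$ (it permutes $x^2$, $y^2$, $x^2y^2$ cyclically) and no cyclic subgroup of order $4$ (an element $v$ of order $4$ can satisfy neither $v^b = v$ nor $v^b = v^{-1}$, since both contradict $b^3 = 1$ unless $v \in M$).
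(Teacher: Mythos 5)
Your proof is correct, but it takes a genuinely different route from the paper's. The paper argues by contradiction: using $\mu(G)\geq 8$ it pins the order of a putative core-free $L$ of index less than $12$ down to exactly $12$, invokes the classification of groups of order $12$, eliminates $C_{12}$, $C_6\times C_2$, $D_6$ and the dicyclic group via Lemma~\ref{lemma:order} (no elements of order $6$ in $G$), and then shows by explicit computation with two order-three generators that a copy of $A_4$ in $G$ must contain $\langle x^2,y^2\rangle$. You instead prove the cleaner, more uniform statement that \emph{every} subgroup of order at least $12$ contains the unique minimal normal subgroup $M$, by intersecting $L$ with the normal subgroup $A=\langle x,y\rangle$ and exploiting that the $\langle b\rangle$-invariant subgroups of $A$ are exactly $1$, $M$ and $A$ (via the local ring $\mathbb{Z}[\omega]/(4)$, with $\Phi(A)=M$ handling the case $3\nmid|\bar L|$). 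Your version avoids both the classification of groups of order $12$ and Lemma~\ref{lemma:order} entirely, and does not need the prior bound $\mu(G)\geq 8$; the paper's version is more elementary and hands-on but is tied to the specific order $12$. One small remark: your ``self-contained'' fallback verification of the invariant-subgroup claim checks only subgroups of order $2$ and cyclic subgroups of order $4$, omitting the three subgroups of order $8$ and the non-cyclic subgroup of order $4$; the latter is $M$ itself, and any order-$8$ subgroup contains $\Phi(A)=M$ in any case, so nothing breaks --- and your primary module-theoretic argument already covers everything.
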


\begin{proof}
Suppose for a contradiction that $\core(L)=\{1\}$ and $|G:L|< 12$. Since $|G|=96$, $|L| >8$. However, if $|L| >12$ then $|G:L| < 8$ and so
$\mu(G) <8$ contradicting that $\mu(G) \geq 8$. Therefore $|L|=12$ and so by the classification of groups of order 12, see \cite{P05}, $L$ is
isomorphic to one of the following groups

\begin{displaymath}
L \cong
\begin{cases}
C_{12} \\ C_6 \times C_2 \\ A_4 \\ D_6 \\ T=\langle s,t \ | \ s^6=1, s^3=t^2, sts=s \rangle
\end{cases}
\end{displaymath}

Notice that the groups $C_{12}, C_6 \times C_2, D_6$ and $T$ each contain an element of order $6$ and so cannot be isomorphic to $L$ by Lemma
\ref{lemma:order}. \vspace{12pt}

Hence $L$ is isomorphic to $A_4$ and so we can find two non-commuting elements $\alpha=x^iy^jb^k$ and $\beta=x^sy^tb^r$ of order three that
generate it such that $\alpha\beta$ has order two. Now $$\alpha\beta=x^py^qb^{k+r}$$ for some $p,q \in \{0,1,2,3\}$ and so $k+r \equiv 0$ mod
$3$ by Lemma \ref{lemma:order}. Without loss of generality let $k=1$. Now
\begin{displaymath}
\alpha\beta=
\begin{cases}
x^2\\ y^2\\ x^2y^2
\end{cases}
\end{displaymath}
and upon conjugation by $\alpha=x^iy^jb$, we get respectively,
\begin{displaymath}
(\alpha\beta)^{\alpha}=
\begin{cases}
y^2\\ x^2y^2 \\x^2.
\end{cases}
\end{displaymath}

So in each case we get $\langle x^2,y^2 \rangle \subseteq L$, contradicting that $L$ is core-free.
\end{proof}

Combining the above lemmas we find that any minimal faithful representation of $G$ is necessarily transitive and that any faithful transitive
representation has degree at least 12. Therefore we have $12 \leq \mu(G)$. But  $\mu(G) \leq 12$. Therefore we have proved the following:

\begin{theorem} \label{theorem:G443}
The minimal faithful permutation degree of $G(4,4,3)$ is $12$.
\end{theorem}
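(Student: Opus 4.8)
The plan is to show $\mu(G) = 12$ by sandwiching the value between two bounds and then closing the gap from below. The upper bound $\mu(G) \leq 12$ is already in hand: since $G$ embeds in the wreath product $W = C_4 \wr \mathrm{Sym}(3)$, whose natural imprimitive action on $3 \cdot 4 = 12$ points is faithful, we have $\mu(G) \leq \mu(W) = 12$. The entire substance of the proof is therefore the reverse inequality $\mu(G) \geq 12$, and my approach is to rule out every faithful representation of degree smaller than $12$.

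First I would invoke Lemma \ref{lemma:tranisitive}, which asserts that $\langle x^2, y^2 \rangle$ is the unique minimal normal subgroup of $G$. The payoff is structural: any faithful permutation representation corresponds to a family of subgroups whose cores intersect trivially, but since every nontrivial normal subgroup contains the unique minimal normal subgroup, the intersection of the cores is trivial if and only if \emph{one} of the cores is already trivial. Consequently a minimal faithful representation can be taken to be transitive, coming from the action on cosets of a single core-free subgroup $L$, and $\mu(G) = \min |G : L|$ over all core-free $L$. This reduces the problem to a pure subgroup-lattice question.

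Next I would combine the order bound with the classification of small-index subgroups. Since $|G| = 96$ and $\mu(G) \geq 8$, any core-free $L$ with $|G : L| < 12$ must satisfy $8 < |L| \leq 12$; a quick check forces $|L| = 12$, so $|G : L| = 8$. Now the classification of groups of order $12$ gives five isomorphism types, and here Lemma \ref{lemma:order} does the decisive work: four of the five types ($C_{12}$, $C_6 \times C_2$, $D_6$, and the dicyclic group $T$) contain an element of order $6$, which $G$ simply does not possess. This eliminates all but $L \cong A_4$.

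The main obstacle, and the crux of the argument, is disposing of the surviving case $L \cong A_4$. Here I would argue concretely rather than abstractly. Generators of an $A_4$ inside $G$ must be order-$3$ elements $\alpha, \beta$ whose product has order $2$; by Lemma \ref{lemma:order} the order-$3$ elements live in the cosets $\langle x, y \rangle b$ and $\langle x, y \rangle b^2$, and the product $\alpha\beta$ lands in $\langle x^2, y^2 \rangle$. Examining the three nontrivial involutions $x^2, y^2, x^2 y^2$ and conjugating by $\alpha$, one finds in each case that $L$ already contains a second distinct involution, hence all of $\langle x^2, y^2 \rangle$. But $\langle x^2, y^2 \rangle$ is the minimal normal subgroup, so it lies in $\core(L)$, contradicting that $L$ is core-free. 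With $A_4$ excluded there is no core-free subgroup of index less than $12$, so $\mu(G) \geq 12$, and combined with the upper bound we conclude $\mu(G) = 12$. The delicate point throughout is keeping the conjugation computations in the nonabelian part consistent with the presentation's relations, so I would track the exponents carefully in that final case.
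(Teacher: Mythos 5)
Your proposal is correct and follows essentially the same route as the paper: transitivity forced by the unique minimal normal subgroup $\langle x^2, y^2 \rangle$, elimination of four of the five order-$12$ types via the absence of elements of order $6$, and exclusion of $A_4$ by showing its generating order-$3$ elements force $\langle x^2, y^2 \rangle \subseteq L$, contradicting core-freeness. No substantive differences from the paper's argument.
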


\section{G(4,4,3) forms a Counter-Example of Degree 12}

Let $W=C_4 \wr Sym(3)$ be the wreath product of the cyclic group of order 4 by the symmetric group on 3 letters. Observe at this point that
since the base group of $W$ is $C_4 \times C_4 \times C_4$, and $\mu(C_4 \times C_4 \times C_4)=12$ by Theorem \ref{theorem:abelian},
$\mu(W)=12$. Let $\gamma_1,\gamma_2,\gamma_3$ be generators for the base group of $W$ and let $a=(2 3),b=(1 2 3)$ be generators for $Sym(3)$
acting coordinate-wise on the base group. It follows that $\gamma:=\gamma_1\gamma_2\gamma_3$ commutes with $a$ and $b$ and thus lies in the
centre of $W$. Let $H= \langle \gamma \rangle$, so $\mu(H)=4$. \vspace{12pt}

Set $x=\gamma_1^{-1}\gamma_2^{2}\gamma_3^{-1}$ and $y=\gamma_1^{-1}\gamma_2^{-1}\gamma_3^{2}$. Then it readily follows that$$x^a=x^b=y, \ y^a=x,
\ y^b=x^{-1}y^{-1},$$ so that $G=\langle x,y,a,b \rangle$ is isomorphic to $G(4,4,3)$. Moreover with a little calculation, it can be shown that
$G \cap H =\{1\}$. \vspace{12pt}

It now follows that $W$ is an internal direct product of $G$ and $H$. Therefore by Theorem \ref{theorem:G443}, we have $$12=\mu(G \times H) <
\mu(G) + \mu(H)=16$$ and so $G$ and $H$ form a counter-example to (\ref{eq:directsum}) of degree 12. \vspace{12pt}

Finally, we remark that using the result from \cite{S07} that $\mu(G(p,p,p))=p^2$ for $p$ a prime, it follows that $\mu(G(3,3,3))=9$. However
the centralizer, $C_{Sym(9)}(G(3,3,3))$ in $Sym(9)$ is a proper subgroup of $G(3,3,3)$. So it is not possible to get a counter-example to
(\ref{eq:directsum}) of degree 9 in this case, by this method. \vspace{12pt}

Similarly by realizing $G(2,2,3)$ as $Sym(4)$, it is immediate that $\mu(G(2,2,3))=4$ and again a counter-example to (\ref{eq:directsum}) of
degree 4 is impossible by this method.

\newpage

\bibliographystyle{plain}

\end{document}